\numberwithin{equation}{section}
\def\pmod #1{\ ({\rm{mod}}\ #1)}
\theoremstyle{plain}
\newtheorem{theorem}{Theorem}
\newtheorem{lemma}{Lemma}
\newtheorem{proposition}{Proposition}
\theoremstyle{definition}
\patchcmd{\@settitle}{\uppercasenonmath\@title}{}{}{}
\patchcmd{\@setauthors}{\MakeUppercase}{}{}{}
\patchcmd{\section}{\scshape}{}{}{}
\begin{document}

\title
[{Solution to a problem of Luca, Menares and Pizarro-Madariaga}]
{Solution to a problem of Luca, Menares and Pizarro-Madariaga}

\author
[Y. Ding and L. Zhao] 
{Yuchen Ding \quad {\it and} \quad Lilu Zhao}

\address{(Yuchen Ding) School of Mathematical Sciences,  Yangzhou University, Yangzhou 225002, People's Republic of China}
\email{ycding@yzu.edu.cn}
\address{(Lilu Zhao) School of Mathematics, Shandong University, Jinan 250100, People's Republic of China}
\email{zhaolilu@sdu.edu.cn}

\keywords{Shifted primes, Brun--Titchmarsh inequality, Primes in arithmetic progressions}
\subjclass[2010]{11N05, 11N36, 11N37.}

\begin{abstract}
Let $k\ge 2$ be a positive integer and $P^+(n)$ the greatest prime factor of a positive integer $n$ with convention $P^+(1)=1$. For any $\theta\in \left[\frac 1{2k},\frac{17}{32k}\right)$, set $$T_{k,\theta}(x)=\sum_{\substack{p_1\cdot\cdot\cdot p_k\le x\\ P^+(\gcd(p_1-1,...,p_k-1))\ge (p_1\cdot\cdot\cdot p_k)^\theta}}1,$$
where the $p'$s are primes. It is proved that 
$$T_{k,\theta}(x)\ll_{k}\frac{x^{1-\theta(k-1)}}{(\log x)^2},$$
which, together with the lower bound 
$$T_{k,\theta}(x)\gg_{k}\frac{x^{1-\theta(k-1)}}{(\log x)^2}$$
obtained by Wu in 2019, answer a 2015 problem of Luca, Menares and Pizarro-Madariaga on the exact order of magnitude of $T_{k,\theta}(x)$.

A main novelty in the proof is that, instead of using the Brun--Titchmarsh theorem to estimate the $k^{th}$ movement of primes in arithmetic progressions, we transform the movement to an estimation involving taking primes simultaneously by linear shifts of primes.
\end{abstract}
\maketitle

\section{Introduction}
Let $\mathcal{P}$ be the set of all primes and $p,q$ be primes throughout our paper.  For any $0<\theta<1$, let $$T_{\theta}(x)=\#\left\{p\le x: p\in \mathcal{P}, P^+(p-1)\ge p^\theta\right\},$$
where $P^+(n)$ denotes the largest prime factor of a positive integer $n$ with convention $P^+(1)=1$. In a cute article, Goldfeld \cite{Gold} investigated the large prime factors of shifted primes, where he showed that
$$T_{\frac{1}{2}}(x)\ge \frac{1}{2}\frac{x}{\log x}+O\left(x\log\log x/(\log x)^2\right)$$
as an elementary application of the Bombieri--Vinogradov theorem and the Brun--Titchmarsh inequality. Goldfeld further remarked that the same argument would lead to the fact
\begin{equation}\label{eq1-1}
\liminf_{x\rightarrow\infty}\frac{T_{\theta}(x)}{\pi(x)}>0
\end{equation}
providing that $\theta<\frac{7}{12}$, where $\pi(x)$ denotes the number of primes not exceeding $x$. Exploring large $\theta$ to satisfy Eq. (\ref{eq1-1}) is certainly a difficult and important research topic. Fouvry \cite{Fou} proved that there is some constant $\theta_0=0.6687$ such that (\ref{eq1-1}) holds. Fouvry's article is historically important for Fermat's Last Theorem as it provided the first evidence that the first case of Fermat's Last Theorem holds for infinitely many primes. For the connection between Fermat's Last Theorem and the shifted primes, one can refer to the work of Adleman and Heath-Brown \cite{AHB}. The best numerical value of $\theta$ satisfying Eq. (\ref{eq1-1}) is 0.677 \cite{B-H}, obtained by Baker and Harman. One can also consider shifted primes without large prime factors. For $\theta>0$, let
$$T^c_{\theta}(x)=\#\left\{p\le x: p\in \mathcal{P}, P^+(p-1)\le p^\theta\right\}.$$
Friedlander \cite{Friedlander} proved that 
\begin{equation*}
\liminf_{x\rightarrow\infty}\frac{T^c_{\theta}(x)}{\pi(x)}>0
\end{equation*}
is admissible for $\theta>0.303...$, improving the former results of Pomerance \cite{Po}, Balog \cite{Balog}, Fouvry and Grupp \cite{FoGr}. For $\theta\ge 0.2961$, Baker and Harman \cite{B-H} showed that there is some $a_1>0$ such that
$$T^c_{\theta}(x)\gg \frac{x}{(\log x)^{a_1}}.$$

It could be predicted that Eq. (\ref{eq1-1}) holds for any $\theta<1$. In fact, as far as 1980, Pomerance \cite{Po} had already conjectured that for any $\theta\in(0,1)$
$$T'_{\theta}(x):=\#\{p\le x: P^+(p-1)\ge x^\theta\}\sim \left(1-\rho\left(1/\theta\right)\right)\pi(x), \quad \text{as} \quad x\rightarrow \infty,$$
where $\rho(u)$ is the Dickman function, defined as the unique continuous solution of the equation
differential--difference
\begin{align*}
\begin{cases}
\rho(u)=1, & 0\le u\le 1,\\
u\rho'(u)=-\rho(u-1), & u>1.
\end{cases}
\end{align*}
Granville \cite{Gran} claimed that Pomerance's conjecture follows from the Eillott--Halberstam conjecture and later Wang \cite{Wang} gave an alternative proof of this claim. Motivated by a conjecture of Chen and Chen \cite{CC}, Wu \cite{Wu} proved that $$T_\theta(x)=T'_\theta(x)+O\left(x\log\log x/(\log x)^2\right),$$
which together with the above facts would imply that for any $\theta\in (0,1)$
\begin{equation}\label{eq1-2}
T_{\theta}(x)\sim \left(1-\rho\left(1/\theta\right)\right)\pi(x), \quad \text{as} \quad x\rightarrow \infty
\end{equation}
under assumption of the Eillott--Halberstam conjecture.
For early works related to large prime factors of shifted primes, one can also refer to the papers \cite{Iwa,Hoo1,Hoo2,Moto}.

Following this line, Luca, Menares and Pizarro-Madariaga \cite{LMP} considered the elaborate lower bound of $T_\theta(x)$ for small values $\theta$. They proved that for $\frac{1}{4}<\theta<\frac{1}{2}$
\begin{equation}\label{eq1-3}
T_\theta(x)\ge (1-\theta)\frac{x}{\log x}+E(x),
\end{equation}
with the error terms
$$
E(x)\ll
\begin{cases}
x\log\log x/(\log x)^2, & \text{for~} 1/4<c\le 1/2,\\
x/(\log x)^{5/3}, & \text{for~} c=1/4.
\end{cases}
$$
Pointing out by Chen and Chen \cite{CC}, the arguments given by Luca {\it et al.} (essentially due to Goldfeld) cannot lead to Eq. (\ref{eq1-3}) for $\theta\in \left(0,\frac{1}{4}\right)$. By some refinements of the method employed by Luca {\it et al.}, Chen and Chen extended the domain of $c$ to the interval $\left(0,\frac{1}{2}\right)$ with slightly better error terms of the order of the magnitude $O\left(x/(\log x)^2\right)$. Motivating by another conjecture of Chen and Chen \cite{CC}, the lower bound (\ref{eq1-3}) of Luca {\it et al.} was then improved to
$$T_{\theta}(x)\ge\left(1-4\int_{1/\theta-1}^{1/\theta}\frac{\rho(t)}{t}dt+o(1)\right)\pi(x)$$
by Feng and Wu \cite{FW}, provided $0<\theta<\theta_1$, where
$\theta_1\thickapprox 0.3517$ is the unique solution of equation
$$\theta-4\int_{1/\theta-1}^{1/\theta}\frac{\rho(t)}{t}dt=0.$$
Shortly after, for $0<\theta<\theta_2\thickapprox0.3734$ Liu, Wu and Xi \cite{LWX} improved the lower bound further to
$$T_{\theta}(x)\ge\left(1-4\rho(1/\theta)+o(1)\right)\pi(x),$$
where $\theta_2$ is the unique solution to equation $\theta-4\rho(1/\theta)=0$. If one believes the Eillott--Halberstam conjecture, then Eq. (\ref{eq1-2}) would imply that
$$\lim_{x\rightarrow \infty}T_\theta(x)/\pi(x)\rightarrow 0, \quad \text{as} \quad \theta\rightarrow 1,$$
which clearly means that there is some $\theta_0<1$ such that
\begin{equation}\label{eq1-4}
\limsup_{x\rightarrow \infty}T_{\theta_0}(x)/\pi(x)<\frac{1}{2}
\end{equation}
under assumption of the Eillott--Halberstam conjecture. Recently, the first author of the present article \cite{Ding} provided an unconditional proof of this matter (Eq. (\ref{eq1-4})), thus disproving a conjecture of Chen and Chen \cite{CC}.

In the same paper, Luca {\it et al.} \cite{LMP} investigated a variant on multiple variables of shifted primes whose motivation is Billerey and Menares' work \cite{BM} on the modularity of reducible mod $\ell$ Galois representations. Precisely, for any positive integer $k$ and $\theta\in \left(0,\frac{1}{k}\right)$, let
$$T_{k,\theta}(x)=\#\left\{p_1\cdot\cdot\cdot p_k\le x: P^+(\gcd(p_1-1,...,p_k-1))\ge (p_1\cdot\cdot\cdot p_k)^\theta\right\}.$$
They proved that for fixed $k\ge 2$ and $\theta\in \left[\frac{1}{2k},\frac{17}{32k}\right)$, there holds the following nontrivial bounds
\begin{equation}\label{eq1-5}
\frac{x^{1-\theta(k-1)}}{(\log x)^{k+1}}\ll_{k}T_{k,\theta}(x)\ll_{k}\frac{x^{1-\theta(k-1)}}{(\log x)^2}(\log\log x)^{k-1}.
\end{equation}
Luca {\it et al.} commented that `{\it Goldfeld's method does not seem to extend to this situation}', and they left as a problem for the readers to determine the exact order of magnitude of $T_{k,\theta}(x)$. With $k$ and $\theta$ in the same domains, Wu \cite{Wu} considerably improved their lower bound by showing that
\begin{equation}\label{eq1-5}
T_{k,\theta}(x)\gg_{k}\frac{x^{1-\theta(k-1)}}{(\log x)^{2}},
\end{equation}
which, as one can see, is close to the upper bound. Wu then commented that: {\it It seems reasonable to think that} $$T_{k,\theta}(x)\asymp_{k}\frac{x^{1-\theta(k-1)}}{(\log x)^{2}}(\log\log x)^{k-1}.$$
This is in fact not the case and we shall prove that the upper bound of $T_{k,\theta}(x)$ coincides with the lower bound given by Wu, thus the actual order of the magnitude of $T_{k,\theta}(x)$ is $$\frac{x^{1-\theta(k-1)}}{(\log x)^{2}}.$$
Let's restate it via the following theorem as an answer to the problem of Luca {\it et al.}
\begin{theorem}\label{thm1}
For any fixed $k\ge 2$ and $\theta\in \left[\frac{1}{2k},\frac{17}{32k}\right)$ we have
$$T_{k,\theta}(x)\asymp_{k}\frac{x^{1-\theta(k-1)}}{(\log x)^2}$$
providing that $x$ is sufficiently large, where the implied constants depend only on $k$.
\end{theorem}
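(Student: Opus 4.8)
Since Wu \cite{Wu} has already established the lower bound $T_{k,\theta}(x)\gg_k x^{1-\theta(k-1)}/(\log x)^2$ in the stated range, the plan is to prove the matching upper bound $T_{k,\theta}(x)\ll_k x^{1-\theta(k-1)}/(\log x)^2$. Parametrise each contributing tuple $(p_1,\dots,p_k)$ by the prime $q:=P^+(d)$, where $d:=\gcd(p_1-1,\dots,p_k-1)$, together with the shifts $a_i:=(p_i-1)/q$, so that $p_i=1+a_iq$. Note $q\nmid p_i$ and $q<\min_i p_i$, while $q\ge(p_1\cdots p_k)^\theta\ge(\min_i p_i)^{k\theta}\ge\sqrt{\min_i p_i}>\sqrt d$ (here $\theta\ge\tfrac1{2k}$ is used), so $q$ is the unique prime divisor of $d$ with $q\ge(p_1\cdots p_k)^\theta$ and is determined by the tuple; discarding the $O_k(1)$ tuples with $q=2$, we may take $q$ odd, whence every $p_i-1$ is even and every shift $a_i$ is even. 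After multiplying by $k!$ to remove the ordering — the diagonal tuples with a repeated prime being handled by the same argument with one fewer variable, which yields a strictly smaller bound — we are reduced to estimating
\[
\sum_{\mathbf a=(a_1,\dots,a_k)}\ \#\Bigl\{\,q\ \text{prime}\ :\ a_1q+1,\dots,a_kq+1\ \text{all prime},\ \prod_{i}(1+a_iq)\le\min\bigl(x,q^{1/\theta}\bigr)\,\Bigr\},
\]
where $\mathbf a$ runs over $k$-tuples of distinct even positive integers.

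For fixed $\mathbf a$, since $1+a_iq\asymp a_iq$, the condition $\prod_i(1+a_iq)\le x$ amounts to $q\le Q_+(\mathbf a)$ with $Q_+(\mathbf a)\asymp_k(x/\prod_i a_i)^{1/k}$, while $q\ge(\prod_i(1+a_iq))^\theta$ forces $q\ge Q_-(\mathbf a):=(\prod_i a_i)^{\theta/(1-k\theta)}$; these are compatible only if $\prod_i a_i\ll_k x^{1-k\theta}$. Throughout this range $\prod_i a_i\le x^{1-k\theta}\le x^{1/2}$ (again using $\theta\ge\tfrac1{2k}$), so $Q_+(\mathbf a)\ge x^{1/(2k)}$ and hence $\log Q_+(\mathbf a)\asymp_k\log x$. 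We bound the inner count by sieving over all of $q\le Q_+(\mathbf a)$ — an overcount that will be absorbed, since the outer sum turns out to be dominated by $\prod_i a_i\asymp x^{1-k\theta}$, where $Q_-(\mathbf a)\asymp Q_+(\mathbf a)$ anyway. Thus the task becomes: for each $\mathbf a$ with $\prod_i a_i\ll_k x^{1-k\theta}$, count the primes $q\le Q_+(\mathbf a)$ for which the $k$ linear shifts $a_iq+1$ are simultaneously prime.

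This is the heart of the matter and the promised departure from earlier work: rather than estimating a $k$-th moment $\sum_q\pi(\,\cdot\,;q,1)^k$ by Brun--Titchmarsh — which only saves a power of $\log$ of the (typically small) moduli and so produces the spurious factor $(\log\log x)^{k-1}$ — we view the $k+1$ forms $q,\,a_1q+1,\dots,a_kq+1$ as a single $(k{+}1)$-dimensional sieve problem in the variable $q$. A standard upper-bound sieve (Selberg's, or Brun's) then gives, uniformly in $\mathbf a$,
\[
\#\bigl\{\,q\le Q_+(\mathbf a)\ :\ q,\,a_1q+1,\dots,a_kq+1\ \text{all prime}\,\bigr\}\ \ll_k\ \mathfrak S(\mathbf a)\,\frac{Q_+(\mathbf a)}{(\log x)^{k+1}},
\]
where $\mathfrak S(\mathbf a)=\prod_p(1-\varrho_{\mathbf a}(p)/p)(1-1/p)^{-(k+1)}$ with $\varrho_{\mathbf a}(p)=\#\{\,q\bmod p:\ p\mid q\prod_i(a_iq+1)\,\}$. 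The point is that, with $q$ as the sieving variable, all $k+1$ forms have size $\ge q$, so each contributes a genuine saving of $\log x$; only a level of distribution $Q_+(\mathbf a)^{1/2-\varepsilon}$ is needed, which is harmless as the forms are linear; and $\mathfrak S(\mathbf a)$ must be kept explicit, since for special $\mathbf a$ it can be as large as $(\log\log x)^{O(k)}$.

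It remains to sum over $\mathbf a$. The singular series averages to a constant, $\sum_{a_i\le A_i\ (1\le i\le k)}\mathfrak S(\mathbf a)\ll_k\prod_iA_i$, so partial summation against $\sum_{n\le Y}d_k(n)\ll Y(\log Y)^{k-1}$ gives $\sum_{\prod_i a_i\le Y}\mathfrak S(\mathbf a)\,(\prod_i a_i)^{-1/k}\ll_k Y^{1-1/k}(\log Y)^{k-1}$. Inserting this with $Y\asymp_k x^{1-k\theta}$ and $Q_+(\mathbf a)\asymp_k(x/\prod_i a_i)^{1/k}$,
\[
T_{k,\theta}(x)\ \ll_k\ \frac{x^{1/k}}{(\log x)^{k+1}}\sum_{\prod_i a_i\le Y}\mathfrak S(\mathbf a)\Bigl(\prod_i a_i\Bigr)^{-1/k}\ \ll_k\ \frac{x^{1/k}\,(x^{1-k\theta})^{1-1/k}(\log x)^{k-1}}{(\log x)^{k+1}}\ =\ \frac{x^{1-\theta(k-1)}}{(\log x)^2},
\]
the exponent arithmetic being $\tfrac1k+(1-k\theta)\tfrac{k-1}{k}=1-\theta(k-1)$; together with Wu's lower bound this gives $T_{k,\theta}(x)\asymp_k x^{1-\theta(k-1)}/(\log x)^2$. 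I expect the main technical hurdles to be: (i) making the sieve estimate of the third paragraph uniform in $\mathbf a$ even when some shift $a_i$ greatly exceeds $Q_+(\mathbf a)$ — this forces one to measure the level of distribution against $Q_+(\mathbf a)$ rather than against the (possibly much larger) values $a_iq+1$, and to check that the tail of $\mathfrak S(\mathbf a)$ beyond the sieve level only helps for an upper bound; (ii) the bookkeeping for the diagonal tuples and for the overcount in the second paragraph; and (iii) the singular-series average, which is routine but is precisely the step that removes the $(\log\log x)^{k-1}$ loss of all previous upper bounds.
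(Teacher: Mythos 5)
Your proposal is correct in substance and rests on exactly the two ideas that drive the paper's proof: (a) fix the shifts and sieve over the common large prime $q$, so that all forms $q,a_1q+1,\dots,a_kq+1$ are of size at least $q$ and each contributes a genuine $\log$ (this is the paper's Lemma \ref{lem3}, i.e.\ Halberstam--Richert Theorem 2.4, whose uniformity in the coefficients also covers your worry (i) about $a_i\gg Q_+(\mathbf a)$, since the dependence on the coefficients enters only through the local factors); and (b) average the resulting singular series over the shifts rather than bounding it pointwise by $(\log\log x)^{O(k)}$, which is the paper's Proposition \ref{pro1}. Where you differ is the decomposition: the paper does not reparametrise the tuples from scratch but imports the reduction of Luca \emph{et al.} (Lemma \ref{mainlemma}), which integrates out one of the $k$ primes to produce the factor $x/\log x$, leaves a $(k-1)$-st moment $\mathcal S(x)$ over moduli $p\in((x/2)^\theta,x^{1/k}]$, and carries the error term $x^{1/k+15(k-1)/(32k)}$ --- this error term is the only place the hypothesis $\theta<\tfrac{17}{32k}$ is used. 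The paper then substitutes $q_j=ph_j+1$, symmetrises to $g\le k-1$ distinct shifts, and applies a $(g+1)$-dimensional sieve with the shifts weighted by $1/(h_1\cdots h_g)$ over a box; you keep all $k$ primes symmetric, sieve $k+1$ forms, and weight by $(\prod_ia_i)^{-1/k}$ over the hyperbolic region $\prod_ia_i\le x^{1-k\theta}$. Your route is more self-contained and, if carried out, would apparently yield the upper bound for all fixed $\theta\in(0,1/k)$ rather than only $\theta<\tfrac{17}{32k}$; the price is that you must handle the diagonal tuples yourself (your sketch is fine: a repeated prime removes one sieved form but also replaces a free variable by a square, and one checks this loses at least one power of $\log x$) and you must prove the singular-series average in the form you actually use. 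On that last point there is a small but real gap in the write-up: the box estimate $\sum_{a_i\le A_i}\mathfrak S(\mathbf a)\ll_k\prod_iA_i$ cannot be fed directly into partial summation against $\sum_{n\le Y}d_k(n)$, because $\mathfrak S$ is not a function of $n=\prod_ia_i$ alone; you need either $\sum_{\prod_ia_i\le t}\mathfrak S(\mathbf a)\ll_k t(\log t)^{k-1}$ or a dyadic decomposition of each $a_i$ combined with the box estimate, and the box estimate itself requires the H\"older argument (splitting $\prod_{p\mid E}(1+1/p)^{O(1)}$ into factors indexed by the $a_i$ and the differences $a_i-a_j$, each of which averages to $O(1)$) that constitutes the paper's Proposition \ref{pro1}. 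You correctly flag this as the step that removes the $(\log\log x)^{k-1}$, but it is the paper's main technical novelty, not a routine footnote, so it should be written out.
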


\section{Proof of the upper bound}
The proof of the upper bound, among other things, is based on a well--known sieve result (weak form) (see e.g. {\cite[Theorem 2.4, Page 76]{Halberstam}}).
\begin{lemma}\label{lem3}
Let $g$ be a natural number, and let $a_i,b_i~(i=1,...,g)$ be integers satisfying
$$E:=\prod_{i=1}^ga_i\prod_{1\le r<s\le g}(a_rb_s-a_sb_r)\neq0.$$
Then
$$\#\{p\le y:a_ip+b_i\in \mathcal{P}, i=1,...,g\}\ll \prod_{p|E}\left(1-\frac1p\right)^{\rho(p)-g}\frac{y}{(\log y)^{g+1}},$$
where $\rho(p)$ denotes the number of solutions of
$$\prod_{i=1}^g(a_in+b_i)\equiv 0\pmod{p},$$
and where the constant implied by the $\ll-$symbol depends on $g$ only.
\end{lemma}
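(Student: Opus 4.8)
The plan is to obtain Lemma~\ref{lem3} from a standard upper-bound sieve of dimension $g+1$: the hypothesis $E\neq 0$ is precisely what forces the $g+1$ linear forms $n,a_1n+b_1,\dots,a_gn+b_g$ to be pairwise non-proportional, so that the sieve density below has dimension exactly $g+1$. First I would reduce the count to a sifting function. Put $F(n)=n\prod_{i=1}^{g}(a_in+b_i)$, a polynomial of degree $g+1$, fix $\delta=\delta(g)\in(0,1/2)$ small, and set $z=y^{\delta}$. If a prime $p\le y$ is counted by $T:=\#\{p\le y:a_ip+b_i\in\mathcal{P},\ i=1,\dots,g\}$ and moreover $p>z$ and $|a_ip+b_i|>z$ for every $i$, then $F(p)$ is a product of primes each exceeding $z$, hence has no prime factor $\le z$. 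Since the number of primes $p\le y$ with $p\le z$, or with $|a_ip+b_i|\le z$ for some $i$, is $O_g(z)=O_g(y^{\delta})$, which is negligible against $y/(\log y)^{g+1}$, we get $T\le S(\mathcal{A},z)+O_g(y^{\delta})$, where $\mathcal{A}=\{n:1\le n\le y\}$ and $S(\mathcal{A},z)=\#\{n\le y:\gcd(F(n),\prod_{q\le z}q)=1\}$.

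Next I would set up and run the sieve. For a prime $q$ put $\rho^{*}(q)=\#\{n\bmod q:q\mid F(n)\}$, extended multiplicatively to squarefree moduli; then $\#\{n\le y:d\mid F(n)\}=\rho^{*}(d)y/d+O(\rho^{*}(d))$ and $\rho^{*}(d)\le(g+1)^{\omega(d)}$. Because $E\neq 0$ one checks that $\rho^{*}(q)=g+1$ for every prime $q$ outside a finite set $\mathcal{E}$ (whose primes divide $E$ or one of the $b_i$), while $\rho^{*}(q)\le g$ for $q\in\mathcal{E}$; consequently $\sum_{q\le z}\rho^{*}(q)(\log q)/q=(g+1)\log z+O_g(1)$ by Mertens' theorem, the error depending only on $\mathcal{E}$. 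Applying Selberg's $\Lambda^{2}$-sieve (equivalently, Brun's sieve) to $\mathcal{A}$ with density $\rho^{*}$ and level of support $D=z^{2}$, the main term is $\ll_g y\prod_{q\le z}(1-\rho^{*}(q)/q)$ and the remainder is $\ll_g D(\log D)^{O_g(1)}=y^{2\delta}(\log y)^{O_g(1)}$, which is $o\big(y/(\log y)^{g+1}\big)$ since $\delta<1/2$.

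Finally I would extract the main term. Writing
\[
\prod_{q\le z}\Bigl(1-\tfrac{\rho^{*}(q)}{q}\Bigr)=\Bigl(\prod_{q\le z}\bigl(1-\tfrac1q\bigr)^{g+1}\Bigr)\cdot\Bigl(\prod_{q\le z}\bigl(1-\tfrac{\rho^{*}(q)}{q}\bigr)\bigl(1-\tfrac1q\bigr)^{-(g+1)}\Bigr),
\]
Mertens' theorem shows the first factor is $\asymp_g(\log y)^{-(g+1)}$, while the second is an absolutely convergent product (each local factor with $q\notin\mathcal{E}$ is $1+O_g(q^{-2})$) which, after matching $\rho^{*}$ at the primes of $\mathcal{E}$ with the quantity $\rho(p)$ of the lemma, equals $\prod_{p\mid E}(1-1/p)^{\rho(p)-g}$ up to a constant depending only on $g$. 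Combining with the reduction of the first paragraph yields $T\ll_g\prod_{p\mid E}(1-1/p)^{\rho(p)-g}\,y/(\log y)^{g+1}$, which is the assertion. The step I expect to be the main obstacle is \emph{uniformity}: the implied constant must depend on $g$ alone and not on $a_i,b_i$, which is exactly why one needs the dimension identity with a coefficient-independent error term and the absolute convergence of the singular product, both of which are controlled purely through $E$ (this is where the precise definition of $E$ and the hypothesis $E\neq 0$ are essential), and why $z$ must be a fixed power of $y$ so that the sieve remainder is absorbed. As a variant one may instead apply Selberg's sieve to the prime-indexed sequence $\{\prod_{i=1}^{g}(a_ip+b_i):p\le y\}$, of size $\pi(y)\asymp y/\log y$, using only the $g$ shift forms (dimension $g$) and controlling the remainder via the Brun--Titchmarsh inequality for primes in arithmetic progressions; this produces the same bound.
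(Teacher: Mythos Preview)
The paper does not prove this lemma; it is quoted as a well-known sieve estimate from Halberstam--Richert \cite[Theorem~2.4]{Halberstam}. Your outline is precisely the standard argument behind such results---sieve the integers $n\le y$ with respect to the $(g{+}1)$ forms $n,\,a_1n+b_1,\dots,a_gn+b_g$, bound the Selberg remainder at level $y^{2\delta}$, and extract the singular product via Mertens---so there is nothing substantive to compare: you have supplied a proof where the paper gives only a reference.

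One remark on the step you yourself flag as the obstacle. When you pass from $\prod_q(1-\rho^{*}(q)/q)(1-1/q)^{-(g+1)}$ to $\prod_{p\mid E}(1-1/p)^{\rho(p)-g}$, the primes dividing some $b_i$ but not $E$ contribute local factors of order $1+1/p$ on the left that have no counterpart on the right, and the product of these over $p\mid\prod_i b_i$ is \emph{not} bounded by a constant depending only on $g$. Applying the sieve to the augmented system with $a_0=1$, $b_0=0$ really produces a product over $p\mid E\prod_i b_i$; the paper's formulation with $E$ alone is harmless only because in its sole application one has $b_i=1$ for every $i$. Your alternative route---sieving the prime-indexed sequence with Brun--Titchmarsh controlling the remainders---sidesteps this bookkeeping and is in fact the cleaner way to reach the statement exactly as written.
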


For any given positive integers $g$ and $\ell$, let
$$\mathcal{W}_{g,\ell}(z)=\sum_{1<h_1<\cdot\cdot\cdot<h_g<z}\frac{1}{h_1\cdot\cdot\cdot h_g}\prod_{p|E_{h_1,...,h_g}}\left(1+\frac{1}{p}\right)^\ell,$$ 
where $$E_{h_1,...,h_g}=h_1\cdot\cdot\cdot h_g\prod_{1\le i<j\le g}\left(h_j-h_i\right).$$

We need also a technical proposition below.

\begin{proposition}\label{pro1}
For any given positive integers $g$ and $\ell$, we have
$$\mathcal{W}_{g,\ell}(z)\ll (\log z)^g,$$
provided that $z$ is sufficiently large, where the implied constant depends only on $g$ and $\ell$.
\end{proposition}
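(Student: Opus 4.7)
The plan is to expand $\prod_{p\mid E_h}(1+1/p)^\ell$ over squarefree divisors of $E_h$, swap the order of summation, and reduce the bound to a convergent Euler product. Setting $c_p := (1+1/p)^\ell - 1 = O_\ell(1/p)$, the identity
$$\prod_{p\mid E_h}(1+1/p)^\ell = \prod_{p\mid E_h}(1+c_p) = \sum_{\substack{d\mid E_h\\ d \text{ sqfree}}} c_d,\qquad c_d:=\prod_{p\mid d}c_p,$$
gives $\mathcal{W}_{g,\ell}(z)=\sum_{d\text{ sqfree}}c_d\, A_d(z)$, where
$$A_d(z):=\sum_{\substack{1<h_1<\cdots<h_g<z\\ d\mid E_{h_1,\ldots,h_g}}}\frac{1}{h_1\cdots h_g}.$$
The analytic heart of the argument is the uniform estimate
$$A_d(z)\ll_g\frac{C_g^{\omega(d)}(\log z)^g}{d}\qquad (\ast)$$
for some constant $C_g$ depending only on $g$.

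To prove $(\ast)$, set $h_0:=0$; then $p\mid E_h$ is equivalent to $p\mid h_j-h_i$ for some $0\le i<j\le g$. Hence
$$\mathbf{1}[d\mid E_h]\le\prod_{p\mid d}\sum_{0\le i<j\le g}\mathbf{1}[p\mid h_j-h_i]=\sum_\phi\prod_{(i,j)}\mathbf{1}[d^{(i,j)}\mid h_j-h_i],$$
where $\phi$ runs over the $\binom{g+1}{2}^{\omega(d)}$ maps from the prime divisors of $d$ to pairs, and $d^{(i,j)}:=\prod_{p:\phi(p)=(i,j)}p$ (so $\prod d^{(i,j)}=d$). For each fixed $\phi$, every prime $p\mid d$ imposes a single rank-one linear condition on $(h_1,\ldots,h_g)\bmod p$; by the Chinese Remainder Theorem, the full constraint system admits exactly $d^{g-1}$ residue classes modulo $d$. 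Combining this density with the standard estimate
$$\sum_{\substack{h\le z\\ h\equiv a\,(\bmod\, n)}}\frac{1}{h}\ll\frac{\log z}{n}+\frac{\mathbf{1}[a\ne 0]}{a}$$
and carefully decomposing the residue-class sum over $\mathbf{a}\bmod d$ (separating the main term from boundary contributions coming from primes at which some variable is divisible by $p$) yields $\sum_{\mathbf{h}\in[1,z]^g}(h_1\cdots h_g)^{-1}\mathbf{1}[\text{constraints}]\ll_g(\log z)^g/d$ for each $\phi$. Summing over the $\binom{g+1}{2}^{\omega(d)}$ maps then gives $(\ast)$.

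Substituting $(\ast)$ back,
$$\mathcal{W}_{g,\ell}(z)\ll_g(\log z)^g\sum_{d\text{ sqfree}}\frac{c_dC_g^{\omega(d)}}{d}=(\log z)^g\prod_p\left(1+\frac{c_pC_g}{p}\right).$$
Since $c_p=O_\ell(1/p)$, each local factor is $1+O_{g,\ell}(1/p^2)$, so the Euler product converges to a constant depending only on $g$ and $\ell$, yielding $\mathcal{W}_{g,\ell}(z)\ll_{g,\ell}(\log z)^g$.

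The main obstacle is the verification of $(\ast)$. Although the density heuristic $d^{g-1}/d^g = 1/d$ immediately suggests the bound $(\log z)^g/d$ for each $\phi$, its rigorous proof requires careful bookkeeping: one must track the combinatorial type of the constraint graph on $\{0,1,\ldots,g\}$, split the sum according to which primes $p\mid d$ witness auto-coincidences $a_i\equiv 0\pmod p$ (which inflate the local density by a factor of $p$), and verify that the resulting auxiliary contributions remain absorbable into the convergent Euler product.
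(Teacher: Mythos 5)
Your overall strategy (expand $\prod_{p\mid E_h}(1+1/p)^{\ell}$ over squarefree $d$, swap the order of summation, and feed a uniform bound for $A_d(z)$ into a convergent Euler product) is coherent and genuinely different from the paper's argument, but as written it has a real gap: the estimate $(\ast)$, which you yourself call the analytic heart, is never actually proved. All of the difficulty of the proposition is concentrated there. After fixing a map $\phi$, you must sum $(h_1\cdots h_g)^{-1}$ over the $d^{g-1}$ admissible residue classes modulo $d$, and the one-variable bound $\sum_{h\le z,\,h\equiv a\ (\mathrm{mod}\ n)}1/h\ll \log z/n+\mathbf{1}[a\ne 0]/a$ produces, besides the main term $d^{g-1}(\log z/d)^g=(\log z)^g/d$, a large family of cross terms in which a subset $S$ of coordinates contributes $1/a_i$ instead of $\log z/d$. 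Bounding $\sum_{\mathbf a}\prod_{i\in S}1/a_i$ over the admissible classes requires analysing how the constraint graph attached to $\phi$ couples the coordinates (classes with $a_i\equiv 0$ modulo part of $d$, chains of congruences $h_j\equiv h_i$ tying several coordinates to one small residue, the fact that $\log d\ll_g\log z$ because one may restrict to $d\mid E_{h_1,\dots,h_g}$ with $h_i<z$), and you explicitly defer exactly this bookkeeping (``the main obstacle is the verification of $(\ast)$''). I believe $(\ast)$ is true and provable along the lines you indicate, so nothing you assert is wrong; but the step that carries the entire content of the proposition is described, not carried out, so the proposal is a programme rather than a proof.

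For comparison, the paper sidesteps the multidimensional residue-class counting altogether: it bounds $\prod_{p\mid E_{h_1,\dots,h_g}}(1+1/p)^{\ell}$ by the product of the analogous factors attached to each $h_j$ and each difference $h_r-h_s$, decouples these $G=\binom{g+1}{2}$ factors by H\"older's inequality, and is then left only with the elementary one-variable estimate $\sum_{1<h<z}h^{-1}\prod_{p\mid h}(1+1/p)^{G\ell}\ll\log z$ (via $(1+1/p)^{G\ell}\le 1+2^{G\ell}/p$ and a divisor swap) together with its immediate two-variable analogue for $h_r-h_s$. If you want to complete your route, either carry out the residue-class bookkeeping for $(\ast)$ in full, or note that applying your divisor expansion to each decoupled factor after such a H\"older step reduces the problem to the same elementary sums and avoids the hard uniformity in $d$ entirely.
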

\begin{proof}
It is easy to see that
$$\prod_{p|E_{h_1,...,h_g}}\left(1+\frac{1}{p}\right)^\ell\le \prod_{j=1}^{g}\prod_{p|h_j}\left(1+\frac{1}{p}\right)^\ell\prod_{1\le s<r\le g}\prod_{p|(h_r-h_s)}\left(1+\frac{1}{p}\right)^\ell.$$
Let $$A_j=\prod_{p|h_j}\left(1+\frac{1}{p}\right)^\ell \quad \text{and} \quad A_{r,s}=\prod_{p|(h_r-h_s)}\left(1+\frac{1}{p}\right)^\ell.$$
Employing the H\"older inequality, we obtain that
\begin{align}\label{equation0}
\mathcal{W}_{g,\ell}(z)&\le \sum_{1<h_1<\cdot\cdot\cdot<h_g<z}\frac{1}{h_1\cdot\cdot\cdot h_g}\prod_{j=1}^{g}A_j\prod_{1\le s<r\le g}A_{r,s}\nonumber\\
&\le\prod_{j=1}^{g}\Bigg(\sum_{\substack{1<h_j<z\\1\le j\le g}}\frac{1}{h_1\cdot\cdot\cdot h_g}A_j^G\Bigg)^{1/G}\prod_{1\le s<r\le g}\Bigg(\sum_{\substack{1<h_j<z\\1\le j\le g}}\frac{1}{h_1\cdot\cdot\cdot h_g}A_{r,s}^G\Bigg)^{1/G},
\end{align}
where $G=g+\binom{g}{2}=\binom{g+1}{2}$. We also write $L=2^{G\ell}$.
Let $\mu(d)$ be the M\"obius function and $\omega(d)$ the number of distinct prime factors of $d$. It is plain that
\begin{align*}
\sum_{1<h<z}\frac{1}{h}\prod_{p|h}\left(1+\frac{1}{p}\right)^{G\ell}
\le \sum_{1<h<z}\frac{1}{h}\prod_{p|h}\left(1+\frac{L}{p}\right)
=\sum_{1<h<z}\frac{1}{h}\sum_{d|h}\frac{\mu^2(d)L^{\omega(d)}}{d}.
\end{align*}
Exchanging the order of sums, we have
\begin{align*}
\sum_{1<h<z}\frac{1}{h}\sum_{d|h}\frac{\mu^2(d)L^{\omega(d)}}{d}&=\sum_{1\le d<z}\frac{\mu^2(d)L^{\omega(d)}}{d}\sum_{\substack{1<h<z\\ d|h}}\frac{1}{h}
\ll \log z\sum_{1\le d<z}\frac{\mu^2(d)L^{\omega(d)}}{d^2}\ll \log z
\end{align*}
due to the convergence of the series
$$\sum\limits_{1\le d<z}\frac{\mu^2(d)L^{\omega(d)}}{d^2},$$
from which we deduce that
\begin{align}\label{equation1}
\sum_{1<h<z}\frac{1}{h}\prod_{p|h}\left(1+\frac{1}{p}\right)^{G\ell}\ll \log z.
\end{align}
Now, for any $1\le j\le g$, by Eq. (\ref{equation1}) we have
\begin{align}\label{equation2}
\sum_{1<h_1<\cdot\cdot\cdot<h_g<z}\frac{1}{h_1\cdot\cdot\cdot h_g}A_j^G\ll (\log z)^{g-1}\sum_{1<h_j<z}\frac{1}{h_j}A_j^G\ll (\log z)^{g}.
\end{align}
For $1\le s<r\le g$, it can be noted that
\begin{align*}
\sum_{1<h_s<h_r<z}\frac{1}{h_sh_r}A_{r,s}^G&=\sum_{1<h_s<h_r<z}\frac{1}{h_sh_r}\prod_{p|(h_r-h_s)}\left(1+\frac{1}{p}\right)^{G\ell}\\
&\le \sum_{1<h_s<z}\sum_{1\le h<z}\frac{1}{h_s(h_s+h)}\prod_{p|h}\left(1+\frac{1}{p}\right)^{G\ell}\\
&<\log z\sum_{1\le h<z}\frac{1}{h}\prod_{p|h}\left(1+\frac{1}{p}\right)^{G\ell}\\
&\ll (\log z)^2,
\end{align*}
where the last estimate follows again from Eq. (\ref{equation1}).
It then can be concluded that
\begin{align}\label{equation3}
\sum_{1<h_1<\cdot\cdot\cdot<h_g<z}\frac{1}{h_1\cdot\cdot\cdot h_g}A_{r,s}^G\ll (\log z)^{g-2}\sum_{1<h_s<h_r<z}\frac{1}{h_sh_r}A_{r,s}^G\ll (\log z)^g.
\end{align}
The proposition now follows from Eqs. (\ref{equation0}), (\ref{equation2}) and (\ref{equation3}).
\end{proof}

It can be seen that trivial estimate would give the bound 
$$\ll (\log z)^g(\log\log z)^g$$
of the sum in Proposition \ref{pro1} for $\ell=g$ due to the well--known estimate
$$\prod_{p|z}\left(1+\frac{1}{p}\right)\ll \log\log z$$
(see e.g. \cite[Theorem 328]{Hardy}).
So, a $(\log\log z)^g$ factor is saved by Proposition \ref{pro1} which is crucial in our proof of the upper bound of main theorem.

The following lemma can be concluded from the paper of Luca {\it et al.}

\begin{lemma}\label{mainlemma}
For any fixed $k\ge 2$ and $\theta\in \left[\frac{1}{2k},\frac{17}{32k}\right)$ we have
$$T_{k,\theta}(x)\ll_k \frac{x}{\log x}\sum_{(x/2)^\theta<p\le x^{1/k}}\frac{1}{p}\Bigg(\sum_{\substack{q\le x\\ q\equiv 1\!\!\pmod{p}}}\frac{1}{q}\Bigg)^{k-1}+o\left(x^{1-\theta(k-1)}/(\log x)^2\right).$$
\end{lemma}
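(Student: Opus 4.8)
The plan is to detect the large common prime factor of the shifted primes and reduce, via sieve estimates for primes in arithmetic progressions, to a sum over that prime; this is essentially the argument of Luca, Menares and Pizarro--Madariaga, with care taken over the size of the prime.

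A tuple $(p_1,\dots,p_k)$ with $n:=p_1\cdots p_k\le x$ is counted by $T_{k,\theta}(x)$ exactly when some prime $p$ divides every $p_i-1$ with $p\ge n^\theta$; fixing such a $p$, every $p_i\equiv 1\pmod{p}$, so $p<\min_i p_i\le n^{1/k}\le x^{1/k}$, and $p\ge n^\theta$ forces $n\le p^{1/\theta}$. Counting (tuple, prime) pairs gives
$$T_{k,\theta}(x)\le\sum_{p\le x^{1/k}}\#\bigl\{(p_1,\dots,p_k):\ p\mid p_i-1\ (1\le i\le k),\ p_1\cdots p_k\le\min(x,p^{1/\theta})\bigr\},$$
which I would split at $p=(x/2)^\theta$. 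For $(x/2)^\theta<p\le x^{1/k}$, after weakening the constraint to $p_1\cdots p_k\le x$, I would order $p_1\le\dots\le p_k$, peel off the largest prime using the sieve bound $\#\{m\le M:\ pm+1\text{ prime}\}\ll M/\log(pM)$ (valid for $M$ not too small), and bound the sum over the remaining primes by $\tfrac1{(k-1)!}\bigl(\sum_{q\le x,\,q\equiv 1\pmod{p}}\tfrac1q\bigr)^{k-1}$. Since $x/(p_1\cdots p_{k-1})\ge x^{1/k}$ the relevant logarithm is always $\gg\log x$, so this yields $\ll_k \frac{x}{p\log x}\bigl(\sum_{q\le x,\,q\equiv 1\pmod{p}}\tfrac1q\bigr)^{k-1}$, except for a thin set of tuples (all $k$ primes within a small power of $p\asymp x^{1/k}$, so that the sieve bound degenerates) whose number is $\ll x^{1/k+\varepsilon}=o(x^{1-\theta(k-1)}/(\log x)^2)$ because $\theta<\tfrac{17}{32k}<\tfrac1k$. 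Summing over $p$ produces the stated main term.

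There remains the range $p\le(x/2)^\theta$, that is the tuples with $n\le x/2$. Running the same peeling under $p_1\cdots p_k\le p^{1/\theta}$ — the logarithms now stay $\gg(\tfrac1{k\theta}-1)\log p\gg\log p$ since $k\theta<\tfrac{17}{32}$, so no degeneracy arises — and using the same sieve input, one gets after summing a bound $\ll_k x^{1-\theta(k-1)}/(\log x)^2$, the primes $p\le(\log x)^{100}$ going trivially into the error term. This has the same order of magnitude as the main term, not a smaller one, so to finish one must dominate it by the main term: this follows from the lower bound $\sum_{q\le x,\,q\equiv 1\pmod{p}}\tfrac1q\gg 1/\phi(p)$ for a positive proportion of primes $p$ in a dyadic block near $(x/2)^\theta$, a consequence of the Bombieri--Vinogradov theorem since $(x/2)^\theta$ lies well below $x^{1/2}$ (as $\theta<\tfrac12$), which gives $\frac{x}{\log x}\sum_{p\asymp(x/2)^\theta}\tfrac1p\bigl(\sum_q\tfrac1q\bigr)^{k-1}\gg \frac{x}{\log x}\sum_{p\asymp(x/2)^\theta}p^{-k}\gg x^{1-\theta(k-1)}/(\log x)^2$. (In the dyadic induction on $x$ used to prove the theorem itself, this range can alternatively be absorbed directly into the inductive hypothesis, since it is essentially $T_{k,\theta}(x/2)$.) The two points where care is genuinely needed are the handling of the degenerate tuples in the peeling — where the upper restriction on $\theta$ is used — and the observation that the small-product contribution is comparable to, rather than smaller than, the main term, so that it must be compared against a lower bound for the main term and not simply discarded.
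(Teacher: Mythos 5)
You should first be aware that the paper does not actually reprove this lemma: its entire ``proof'' is a citation of Eqs.~(6) and (7) of Luca, Menares and Pizarro-Madariaga, which give the displayed bound with the explicit error term $x^{1/k+15(k-1)/(32k)}$, followed by the one-line observation that this power of $x$ is $o\bigl(x^{1-\theta(k-1)}/(\log x)^2\bigr)$ precisely because $\theta<\frac{17}{32k}$. Your reconstruction of the range $(x/2)^\theta<p\le x^{1/k}$ is essentially the right argument: peel off the largest prime of the tuple by Brun--Titchmarsh, use $p_1\cdots p_{k-1}\le x^{(k-1)/k}$ to keep the relevant logarithm of size $\log x$ except when all $k$ primes lie in $[p,px^{\varepsilon}]$, and bound those degenerate tuples by $x^{1/k+k\varepsilon}$, which is where the hypothesis $\theta<\frac{17}{32k}<\frac1k$ enters; this matches the provenance of the error term in the source.

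The gap is in your treatment of the complementary range $p\le(x/2)^\theta$, i.e.\ the tuples with $n\le x/2$. You claim that ``the same peeling\dots and the same sieve input'' give $\ll_k x^{1-\theta(k-1)}/(\log x)^2$ there. They do not: after peeling, you must numerically evaluate $\bigl(\sum_{q\le p^{1/\theta},\, q\equiv 1\pmod{p}}\tfrac1q\bigr)^{k-1}$, and Brun--Titchmarsh with partial summation gives only $\sum_{q\le p^{1/\theta},\, q\equiv 1\pmod{p}}\tfrac1q\ll(\log\log p)/p$; the loss comes from the primes $q$ with $\log(q/p)=o(\log q)$, and shortening the range of $q$ from $x$ to $p^{1/\theta}$ does not remove it. (Removing exactly this $\log\log$ is the entire point of Section~2 of the paper, so you cannot take it for granted here.) What your tools actually yield for the small-$p$ range is $\ll x^{1-\theta(k-1)}(\log\log x)^{k-1}/(\log x)^2$, which is neither $o\bigl(x^{1-\theta(k-1)}/(\log x)^2\bigr)$ nor absorbable into the main term: your Bombieri--Vinogradov lower bound for the main term is only $\gg x^{1-\theta(k-1)}/(\log x)^2$, short by the factor $(\log\log x)^{k-1}$. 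The argument therefore does not close as written. The parenthetical dyadic alternative is viable for Theorem~\ref{thm1} itself (apply the main-range analysis together with the machinery of Section~2 to each dyadic block of $n$ and sum the resulting geometric series), but it proves the theorem directly rather than the lemma as stated.
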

\begin{proof}
We know, from \cite[Eqs. (6) and (7)]{LMP}, that
\begin{align}\label{eq3-1}
T_{k,\theta}(x)&\ll_k \frac{x}{\log x}\sum_{(x/2)^\theta<p\le x^{1/k}}\frac{1}{p}\Bigg(\sum_{\substack{q\le x\\ q\equiv 1\!\!\pmod{p}}}\frac{1}{q}\Bigg)^{k-1}+x^{1/k+15(k-1)/(32k)}.
\end{align}
It is clear that
$$x^{1/k+15(k-1)/(32k)}=o\left(x^{1-\theta(k-1)}/(\log x)^2\right)$$
since $\theta\in\left[\frac{1}{2k},\frac{17}{32k}\right)$.
\end{proof}

To obtain the upper bound of $T_{k,\theta}(x)$, Luca {\it et al.} employed the Brun--Titchmarsh inequality (see e.g. \cite[Theorem 3.9]{MV}) and partial summation which led to the bound
$$\sum_{\substack{q\le x\\ q\equiv 1\!\!\pmod{p}}}\frac{1}{q}\ll \frac{\log\log x}{p}.$$
This would offer an extra factor $(\log\log x)^{k-1}$ in their final bound due to the quantity $\log\log x$ above. Note that even under the Eillott--Halberstam conjecture we cannot gain a correct size of the single prime counting function $\pi(y;p,1)$ when the modulus $p$ is close to the size like $y\exp(-\sqrt{\log y})$. Thus, the usual way for estimating single $\pi(y;p,1)$ and then summing over them is not applicable here. Our observation is that, ideally, the primes $q$ are evenly distributed among arithmetic progressions and thus we {\bf `should have'} 
$$\sum_{\substack{q\le x\\ q\equiv 1\!\!\pmod{p}}}\frac{1}{q}=\sum_{\substack{p<q\le x\\ q\equiv 1\!\!\pmod{p}}}\frac{1}{q}\overset{\text{heuristically}}{\ll}\frac{\log\log x-\log\log p}{p} \quad (\text{since}~~\varphi(p)=p-1),$$
from which we would get rid of a factor $(\log\log x)^{k-1}$ in the upper bound since 
$\log\log x-\log\log q$ is bound by a constant for $(x/2)^\theta<p\le x^{1/k}$. We intend to achieve this goal by rearranging the summations and then a sieve result (i.e., Lemma \ref{lem3}) could be used. We believe that the bound obtained by the sieve method reflects its real order of magnitude.

Now, let's fulfill our ideas.

\begin{proof}[Proof of the upper bound of Theorem \ref{thm1}]  Set
\begin{align}\label{eq3-2}
\mathcal{S}(x):=\sum_{(x/2)^\theta<p\le x^{1/k}}\frac{1}{p}\Bigg(\sum_{\substack{q\le x\\ q\equiv 1\!\!\pmod{p}}}\frac{1}{q}\Bigg)^{k-1}.
\end{align}
Opening up the sums in $S(x)$, we obtain that
\begin{align*}
\mathcal{S}(x)&=\sum_{(x/2)^\theta<p\le x^{1/k}}\frac{1}{p}\sum_{\substack{q_j\le x\\ q_j\equiv 1\!\!\pmod{p}\\1\le j\le k-1}}\frac{1}{q_1\cdot\cdot\cdot q_{k-1}}.
\end{align*}
Since $q_j\equiv 1\pmod{p}$, we can assume that $q_j=ph_j+1~(1\le j\le k-1)$. Then 
\begin{align}\label{eq3-3}
\mathcal{S}(x)\le \sum_{(x/2)^\theta<p\le x^{1/k}}\frac{1}{p^k}\sum_{\substack{1<h_j< x/p\\ph_j+1\in \mathcal{P} \\1\le j\le k-1}}\frac{1}{h_1\cdot\cdot\cdot h_{k-1}}.
\end{align}
Rearranging the sums in Eq. (\ref{eq3-3}), we have
\begin{align*}
\mathcal{S}(x)\le\sum_{\substack{1<h_1,...,h_{k-1}< 2^\theta x^{1-\theta}}}\frac{1}{h_1\cdot\cdot\cdot h_{k-1}}\sum_{\substack{(x/2)^\theta<p\le x^{1/k}\\ ph_j+1\in \mathcal{P} \\1\le j\le k-1}}\frac{1}{p^k}.
\end{align*}
By symmetries of the integers $h'$s above, the sum $\mathcal{S}(x)$ can be bounded as
\begin{align}\label{eq3-4}
\mathcal{S}(x)\ll_k\sum_{g=1}^{k-1}\sum_{\substack{1<h_1<\cdot\cdot\cdot<h_{g}< 2^\theta x^{1-\theta}}}\frac{1}{h_1\cdot\cdot\cdot h_{g}}\sum_{\substack{(x/2)^\theta<p\le x^{1/k}\\ ph_j+1\in \mathcal{P} \\1\le j\le g}}\frac{1}{p^k}.
\end{align}
The innermost sum above in Eq. (\ref{eq3-4}) can be bounded by Lemma \ref{lem3}. Actually, integrating by parts, we obtain that
\begin{align}\label{eq3-5}
\sum_{\substack{(x/2)^\theta<p\le x^{1/k}\\ ph_j+1\in \mathcal{P} \\1\le j\le g}}\frac{1}{p^k}=\frac{\mathcal{M}(x^{1/k})}{x}-\frac{\mathcal{M}((x/2)^{\theta})}{(x/2)^{k\theta}}+k\int\limits_{(x/2)^\theta}^{x^{1/k}}\frac{\mathcal{M}(t)}{t^{k+1}}dt,
\end{align}
where
$$\mathcal{M}(t)=\#\{p\le t:h_ip+1\in \mathcal{P}, i=1,...,g\}.$$
From Lemma \ref{lem3}, we know that
\begin{align}\label{eq3-6}
\mathcal{M}(t)\ll_k \prod_{p|E_{h_1,...,h_g}}\left(1-\frac1p\right)^{\rho(p)-g}\frac{t}{(\log t)^{g+1}}\ll \prod_{p|E_{h_1,...,h_g}}\left(1+\frac1p\right)^{g}\frac{t}{(\log t)^{g+1}},\end{align}
where $$E_{h_1,...,h_g}=h_1\cdot\cdot\cdot h_g\prod_{1\le i<j\le g}\left(h_j-h_i\right)\neq 0.$$
Taking Eq. (\ref{eq3-6}) into Eq. (\ref{eq3-5}), we will have
\begin{align}\label{eq3-7}
\sum_{\substack{(x/2)^\theta<p\le x^{1/k}\\ ph_j+1\in \mathcal{P} \\1\le j\le g}}\frac{1}{p^k}\ll_k \frac{x^{\theta(1-k)}}{(\log x)^{g+1}}\prod_{p|E_{h_1,...,h_g}}\left(1+\frac1p\right)^{g}.
\end{align}
Combing Eq. (\ref{eq3-7}) with Eq. (\ref{eq3-4}), we arrive at the bound
\begin{align*}
\mathcal{S}(x)\ll_k \sum_{g=1}^{k-1}\frac{x^{\theta(1-k)}}{(\log x)^{g+1}}\sum_{\substack{1<h_1<\cdot\cdot\cdot<h_{g}< 2^\theta x^{1-\theta}}}\frac{1}{h_1\cdot\cdot\cdot h_{g}}\prod_{p|E_{h_1,...,h_g}}\left(1+\frac1p\right)^{g}.
\end{align*}
It can be deduced from Proposition \ref{pro1} with $\ell=g$ that
\begin{align*}
\sum_{\substack{1<h_1<\cdot\cdot\cdot<h_{g}< 2^\theta x^{1-\theta}}}\frac{1}{h_1\cdot\cdot\cdot h_{g}}\prod_{p|E_{h_1,...,h_g}}\left(1+\frac1p\right)^{g}\ll_g (\log 2^\theta x^{1-\theta})^g\ll_g (\log x)^g,
\end{align*}
which clearly implies that
\begin{align}\label{eq3-8}
\mathcal{S}(x)\ll_k \sum_{g=1}^{k-1}\frac{x^{\theta(1-k)}}{\log x}\ll _k \frac{x^{\theta(1-k)}}{\log x}.
\end{align}
Our theorem now follows from Lemma \ref{mainlemma} and Eqs. (\ref{eq3-2}), (\ref{eq3-8}).
\end{proof}

\section*{Acknowledgments}
The first author is supported by National Natural Science Foundation of China  (Grant No. 12201544), Natural Science Foundation of Jiangsu Province, China (Grant No. BK20210784), China Postdoctoral Science Foundation (Grant No. 2022M710121), the foundations of the projects "Jiangsu Provincial Double--Innovation Doctor Program'' (Grant No. JSSCBS20211023) and "Golden  Phoenix of the Green City--Yang Zhou'' to excellent PhD (Grant No. YZLYJF2020PHD051).

The second author is support by the National Key Research and Development Program of China (Grant No. 2021YFA1000700) and National Natural Science Foundation of China (Grant No. 11922113).


\begin{thebibliography}{KMP}

\bibitem{AHB} L.M. Adleman and D.R. Heath-Brown, {\it The first case of Fermat's last theorem,} Invent. Math., {\bf 79} (1985), 409--416.


\bibitem{B-H} R.C. Baker and G. Harman, {\it Shifted primes without large prime factors,} Acta Arith., {\bf 83} (1998), 331--361. 

\bibitem{Balog} A. Balog, {\it $p+a$ without large prime factors,} S\'em. Th\'eor. Nr. Bordx. (1983--1984), expos\'e 31.

\bibitem{BM} N. Billerey and R. Menares, {\it On the modularity of reducible mod $\ell$ Galois representations,} Math. Res. Lett., {\bf 23} (2016), 15--41.

\bibitem{CC} F.-J. Chen and Y.-G. Chen, {\it On the largest prime factor of shifted primes,} Acta Math. Sin. (Engl Ser), {\bf 33} (2017) 377--382.

\bibitem{Iwa} J.-M. Deshouillers and H. Iwaniec, {\it On the Brun--Titchmarsh theorem on average,} Topics in classical number theory, Vol. I., II (Budapest, 1981), 319--333. Colloq. Math. Soc. J\'anos Bolyai, {\bf 34}, North--Holland, Amsterdam, 1984.

\bibitem{Ding} Y. Ding, {\it On a conjecture on shifted primes with large prime factors,} Arch. Math. (Basel), {\bf120} (2023), 245--252.

\bibitem{FW} B. Feng and J. Wu, {\it On the density of shifted primes with large prime factors,} Sci. China Math., {\bf 61} (2018) 83--94.

\bibitem{Fou} \'{E}tienne Fouvry, {\it Th\'{e}or\`{e}me de Brun--Titchmarsh: application au th\'{e}or\`{e}me de Fermat,} Invent. Math., {\bf 79} (1985) 383--407.

\bibitem{FoGr} E. Fouvry and F. Grupp, {\it On the switching principle in sieve theory,} J. Reine Angew. Math., {\bf 370} (1986), 101--126.

\bibitem{Friedlander} J.B. Friedlander, {\it Shifted primes without large prime factors,} In: Number Theory and Applications, (1989), 393--401.

\bibitem{Gold} M. Goldfeld, {\it On the number of primes $p$ for which $p+a$ has a large prime factor,} Mathematika, {\bf16} (1969) 23--27.

\bibitem{Gran} A. Granville, {\it Smooth numbers: computational number theory and beyond,} In: Algorithmic Number Theory: Lattices, Number Fields, Curves and Cryptography, vol. 44. Cambridge University Press, Cambridge (2008).

\bibitem{Halberstam} H. Halberstam and H. Richert, {\it Sieve Methods,} Academic Press. London (1974).

\bibitem{Hardy} G.H. Hardy and E.M. Wright, {\it An Introduction to the Theory of Numbers,} 5th ed., Oxford Univ. Press, 1979.

\bibitem{Hoo1} C. Hooley, {\it On the Brun--Titchmarsh theorem,} J. Reine Angew. Math., {\bf255} (1972), 60--79.

\bibitem{Hoo2} C. Hooley, {\it On the largest prime factor of $p+a$,} Mathematika, {\bf20} (1973), 135--143.

\bibitem{Moto} Y. Motohashi, {\it A note on the least prime in arithmetic progression with a prime difference,} Acta Arith., {\bf 17} (1970), 283--285.

\bibitem{LWX} J. Liu, J. Wu and P. Xi, {\it Primes in arithmetic progressions with friable indices,} Sci. China Math., {\bf63} (2020), 23--38.

\bibitem{LMP} F. Luca F, R. Menares and A. Pizarro--Madariaga, {\it On shifted primes with large prime factors and their products,} Bull. Belg. Math. Soc. Simon Stevin, {\bf22} (2015) 39--47.

\bibitem{MV} H.L. Montgomery and R. C. Vaughan, Multiplicative Number Theory I: Classical Theory. Cambridge University Press, Cambridge, (2006).

\bibitem{Po} C. Pomerance, {\it Popular values of Euler's function,} Mathematika, {\bf27} (1980) 84--89.

\bibitem{Wang} Z.-W. Wang, {\it Autour des plus grands facteurs premiers d'entiers cons\'{e}cutifs voisins d'un entier cribl\'{e},} Q. J. Math., {\bf 69} (2018) 995--1013.

\bibitem{Wu} J. Wu, {\it On shifted primes with large prime factors and their products,} Arch. Math. (Basel), {\bf112} (2019) 387--393.

\end{thebibliography}
\end{document}